\newtheorem{theorem}{Theorem}[section]
\newtheorem{lemma}[theorem]{Lemma}
\theoremstyle{definition}
\newtheorem{example}[theorem]{Example}
\newtheorem{proposition}[theorem]{Proposition}
\newtheorem{corollary}[theorem]{Corollary}
\theoremstyle{remark}
\newtheorem{remark}[theorem]{Remark}
\numberwithin{equation}{section}
\DeclareMathOperator{\dv}{\textrm{div}}
\DeclareMathOperator{\grad}{grad}
\DeclareMathOperator{\Ric}{Ric}
\DeclareMathOperator{\inj}{Inj\,}
\DeclareMathOperator{\vol}{\mathrm{vol}\,}
\DeclareMathOperator{\dS}{\mathrm{dS}}
\DeclareMathOperator{\dvol}{\,\mathrm{dvol}}
\DeclareMathOperator{\dist}{\textrm{dist}\,}
\begin{document}

\title{Kuttler-Sigillito's Inequalities and Rellich-Christianson Identity}

\author[S.~M.~Berge]{Stine Marie Berge}
\address{Institute for Analysis, Leibniz University Hannover, Welfengarten 1 30167 Hannover, Germany}
\email{stine.berge@math.uni-hannover.de}


\date{}

\dedicatory{}

\commby{}

\begin{abstract}
  This article has two main objectives. The first one is to show Kuttler-Sigillito's type inequalities involving the mixed Neumann-Dirichlet, mixed Steklov-Dirichlet, and mixed Robin-Dirichlet eigenvalue problems. We will provide examples on e.g. squares and balls for the inequalities presented. Next we will show a Rellich identity for the mixed Neumann-Dirichlet problem. This identity will be used to prove a Rellich-Christianson identity for the Neumann-Dirichlet problem.
\end{abstract}

\maketitle
\section{Introduction}
In this paper we will study various eigenvalue problems with mixed conditions on the boundary. The first problem of interest is the eigenvalue problem for the Laplace operator on a bounded domain $\Omega$ on Riemannian manifolds with mixed Neumann-Dirichlet conditions on the boundary $\partial \Omega$. This problem has for example been studied in \cite{LR17, JLNP06}. In the first part of the article we are going to compare these eigenvalues with eigenvalues of the mixed Steklov-Dirichlet problem. More precisely, we consider the following problem $\Delta u = 0$ on $\Omega$ with boundary conditions $u_n=\sigma u$ on part of the boundary $F\subset \partial \Omega$ and Dirichlet conditions $u=0$ on $\partial \Omega \setminus F$. This problem has been studied in e.g.~\cite{Se21,BKPS10, HL19}. When the Dirichlet conditions are not present the problem is known as the Steklov problem, for historical context see \cite{KKKNPP14}.

Kuttler and Sigillito showed in \cite{KS68} several bounds for eigenvalue problems in $\mathbb{R}^2$. One of their results is as follows: Let $\Omega\subset \mathbb{R}^2$ be a bounded star convex domain  with $C^2$-boundary. Denote by $\mu_2$ the first non-zero eigenvalue of the Laplace eigenvalue problem with Neumann boundary conditions and let $\sigma_2$ be the first non-zero eigenvalue of the Steklov problem $\Delta u=0$ on $\Omega$ and $\sigma_2u=u_n$ on $\partial\Omega$. Then 
\begin{equation}\label{eq:intro:1}
\sigma_2\ge \frac{h_{\min} \mu_2}{2r_{\max}\sqrt{\mu_2}+2},
\end{equation}
where $h_{\min}$ and $r_{\max}$ are geometric constants depending explicitly on $\Omega$. In \cite{HS19} Hassannezhad and Siffert generalized \eqref{eq:intro:1} to Riemannian manifolds of arbitrary dimension.

One of the tools used in \cite{KS68} and later in \cite{HS19} is the Rellich identity which expresses the eigenvalue in terms of the eigenfunction and its gradient on the boundary. The simplest example is the Laplace Dirichlet eigenvalue problem $\Delta u +\lambda u= 0$ on the $C^2$-domain $\Omega\subset \mathbb{R}^n$ with $u=0$ on the boundary $\partial\Omega$. In this case $\lambda$ can be expressed as 
\begin{equation}\label{eq:2:intro}
   \lambda = \frac{1}{2}\int_{\partial \Omega}x\cdot \mathbf{n}\,u_n^2\dS,
\end{equation}
where $\dS$ is the surface measure on the boundary with normal $\mathbf{n}$. Rellich proved \eqref{eq:2:intro} in \cite{Re40}. Similar Rellich identities were shown to hold for other eigenvalue problems, e.g.\ the Neumann, clamped plate and buckling problem see \cite{Li07}. The Rellich identities in \cite{Re40, Li07} were also recently extended to eigenvalue problems on Riemannian manifolds \cite{HS19}. 

For Dirichlet eigenfunctions on the triangle one can get an interesting reformulation of the Rellich identity as shown in \cite{Ch19} by Christianson. It was further extended to polytopes in \cite{Me18}, and is known as the Rellich-Christianson. For a regular polytope $P$ this result gives 
\[\lambda = \frac{C}{2}\int_{\partial P}u_n^2\dS,\]
where $C$ is the distance from the center to the faces.

The first goal of this article is to generalize \eqref{eq:intro:1} to all eigenvalues. In particular for a star-shaped domain $\Omega \subset \mathbb{R}^2$ this means that if $\sigma_k$ and $\mu_k$ are the $k$'th eigenvalues to the Steklov problem and the Neumann problem, respectively we have
\[ \sigma_k\ge \frac{h_{\min} \mu_k}{2r_{\max}\sqrt{\mu_k}+2}.\]
More precisely, we prove an inequality similar to \eqref{eq:intro:1} for mixed Neumann-Dirichlet problem and Steklov-Dirichlet problem on sub-domains in Riemannian manifolds of arbitrary dimension, generalizing the result of \cite{HS19}. Moreover, we will show an inequality for the first mixed Steklov-Dirichlet eigenvalue, the mixed Neumann-Dirichlet, and the mixed Robin-Dirichlet eigenvalues. In the last part of the article we will show a Rellich type identity on $\mathbb{R}^n$ for the mixed Neumann-Dirichlet problem. We will use this identity to show a Rellich-Christianson identity for the mixed Neumann-Dirichlet problem on polytopes.

An outline of the article is as follows: In Section \ref{sec:Kut} we will give the Kuttler-Sigillito's type inequalities. The inequality will be obtained on Riemannian manifolds satisfying a curvature bound. In Section~\ref{sec:hadamard} we will discuss the Hadamard formula and prove it in the case of the Laplace eigenvalue problem with mixed Neumann-Dirichlet boundary conditions. Section~\ref{sec:rellich} will be devoted to proving the Rellich identities using Hadamard formulas for the mixed Neumann-Dirichlet problem. In the last two sections we will prove the Rellich-Christianson identity by using the Rellich identity on polytopes for the mixed Neumann-Dirichlet problem.

\section{Kuttler-Sigillito's Type Inequalities}\label{sec:Kut}
Let $(M,\mathbf{g})$ be a smooth $n$ dimensional Riemannian manifold. Denote by $\Ric$ the Ricci curvature of $M$, and assume that $(M,\mathbf{g})$ satisfies the curvature bound
\begin{equation}\label{curv bound}
 (n-1)\kappa |X|^2\le \mathrm{Ric}(X,X), 
\end{equation}
where $|X|^2= \mathbf{g}(X,X)$ for a vector field $X$ and $\kappa \in \mathbb{R}$.
The Laplacian with respect to $\mathbf{g}$ of a twice differentiable function $\phi$ is defined by
\[\Delta \phi =\dv(\grad \phi),\]
where $\dv$ and $\grad $ denotes the divergence and gradient with respect to $\mathbf{g}$. 

In this text we will let $r(x)$ denote the radial distance function from a chosen point $p\in M$. We will always work with domains which are contained inside a geodesic ball centered at $p$ with radius less than the injectivity radius $\inj(p)$. Some properties of the radial distance function are that $|\grad r|=1$ and $r^2$ is a smooth function on $B_{\inj(p)}(p)$.
For manifolds satisfying \eqref{curv bound} we have the following volume comparison result:
\begin{lemma}[Volume Comparison {\cite[Lemma 7.1.2]{Pe16}}]\label{C:Rauch}
   Assume that $(M,\mathbf{g})$ is an $n$-dimensional Riemannian manifold satisfying \eqref{curv bound}, and denote by \[\cot_\kappa(r)=\begin{cases}
      \coth(r\sqrt{-\kappa})/\sqrt{-\kappa},& \kappa<0\\
      1/r,& \kappa=0\\
      \cot(r\sqrt{\kappa})/\sqrt{\kappa},& \kappa>0\\
   \end{cases}.\] 
   Then for all $x\in M$ such that $r(x)<\inj(p)$ we have
\begin{equation*}
   \Delta r\le \left( n-1 \right)\cot_\kappa(r).
\end{equation*}
\end{lemma}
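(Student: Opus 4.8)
The plan is to prove this Laplacian comparison estimate by reducing it to a one-dimensional Riccati differential inequality along radial geodesics and then invoking a comparison principle. Fix a unit-speed geodesic emanating from $p$ and write $m(r)=\Delta r$ along it. Since $|\grad r|=1$, the radial field $\partial_r=\grad r$ is geodesic, so $\mathrm{Hess}\,r(\partial_r,\cdot)=0$ and $\mathrm{Hess}\,r$ acts effectively on the $(n-1)$-dimensional orthogonal complement $(\partial_r)^\perp$, where it equals the second fundamental form of the geodesic spheres; in particular $\tr(\mathrm{Hess}\,r)=\Delta r=m$. Applying the Bochner formula to $r$ and using that $|\grad r|^2\equiv 1$ has vanishing Laplacian gives
\begin{equation*}
0=|\mathrm{Hess}\,r|^2+\partial_r(\Delta r)+\Ric(\partial_r,\partial_r),
\end{equation*}
where $\partial_r(\Delta r)=m'$ denotes the derivative along the geodesic. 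This is the radial Riccati equation $m'=-|\mathrm{Hess}\,r|^2-\Ric(\partial_r,\partial_r)$.

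Next I would insert two pointwise bounds. Because $\mathrm{Hess}\,r$ is a symmetric endomorphism of the $(n-1)$-dimensional space $(\partial_r)^\perp$ with trace $m$, the Cauchy--Schwarz inequality gives $|\mathrm{Hess}\,r|^2\ge m^2/(n-1)$, and the curvature hypothesis \eqref{curv bound} gives $\Ric(\partial_r,\partial_r)\ge (n-1)\kappa$. Substituting both into the Riccati equation yields the differential inequality
\begin{equation*}
m'(r)\le -\frac{m(r)^2}{n-1}-(n-1)\kappa.
\end{equation*}
A direct computation shows that the comparison function $m_\kappa(r):=(n-1)\cot_\kappa(r)$ satisfies the corresponding equality $m_\kappa'=-m_\kappa^2/(n-1)-(n-1)\kappa$ in each of the three cases, so it suffices to prove $m\le m_\kappa$.

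The last and most delicate step is the Riccati comparison, and the difficulty is precisely that both $m$ and $m_\kappa$ blow up like $(n-1)/r$ as $r\to 0^+$, so the standard ODE comparison theorem with finite initial data does not apply directly. Setting $u=m-m_\kappa$ and subtracting the two relations gives $u'\le -\frac{m+m_\kappa}{n-1}\,u$, so with the positive integrating factor $\phi(r)=\exp\big(\tfrac{1}{n-1}\int_a^r(m+m_\kappa)\,ds\big)$ the product $\phi u$ is non-increasing. To launch the comparison I would use the normal-coordinate expansion $\Delta r=\frac{n-1}{r}+O(r)$ near $p$, together with the identical expansion $m_\kappa(r)=\frac{n-1}{r}+O(r)$, to conclude $u(r)=O(r)$ and $\phi(r)=O(r^2)$ as $r\to 0^+$; hence $\phi(r)u(r)\to 0$. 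Since $\phi u$ is non-increasing and $\phi>0$, this forces $u(r)\le 0$ for all $r<\inj(p)$, which is exactly $\Delta r\le (n-1)\cot_\kappa(r)$. An alternative that packages the singular behaviour more cleanly is the linearizing substitution $m=(n-1)\psi'/\psi$, which turns the Riccati comparison into a Sturm-type comparison between solutions of the Jacobi equation $\psi''+\kappa\psi=0$ with $\psi(0)=0$ and $\psi'(0)=1$.
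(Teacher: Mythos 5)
The paper offers no proof of this lemma at all --- it is quoted verbatim from the cited reference (Petersen, Lemma 7.1.2) --- and your Bochner-formula/Riccati-comparison argument is precisely the standard proof given there, correctly executed, including the delicate launching of the comparison at $r=0$ where both $m$ and $m_\kappa$ blow up like $(n-1)/r$. One caveat: your assertion that $m_\kappa=(n-1)\cot_\kappa$ solves $m_\kappa'=-m_\kappa^2/(n-1)-(n-1)\kappa$ is a correct ``direct computation'' only for the standard normalization $\cot_\kappa(r)=\mathrm{sn}_\kappa'(r)/\mathrm{sn}_\kappa(r)$, i.e.\ $\sqrt{\kappa}\,\cot(r\sqrt{\kappa})$ for $\kappa>0$ and $\sqrt{-\kappa}\,\coth(r\sqrt{-\kappa})$ for $\kappa<0$; the expression printed in the lemma puts $\sqrt{|\kappa|}$ in the denominator and does not even tend to $1/r$ as $\kappa\to0$, so this is evidently a typo in the statement rather than a gap in your argument.
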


Denote by $\vol$ the volume density and let $\Omega\subset B_{\inj(p)}(p)$ be a domain with $C^2$-boundary. If $u,v\in H^1(\Omega)$ then by the trace theorem $v\in H^{1/2}(\partial \Omega)$ and $u_n\in H^{-1/2}(\partial\Omega)$. If additionally, we have that $\Delta u\in L^2(\Omega)$ then 
\begin{equation}\label{eq:divergence}
\int_\Omega \grad u \cdot \grad v\dvol = -\int_\Omega v\Delta u\dvol+\langle u_n,v\rangle_{\partial \Omega},
\end{equation}
where $\langle u_n,v\rangle_{\partial \Omega} $ is the paring between $H^{-1/2}(\partial \Omega)$ and $H^{1/2}(\partial \Omega)$, see \cite[Chap.~4]{Mc20}.
\begin{remark}\label{re:Lip}
   In the case when $\Omega\subset \mathbb{R}^n$ one can relax the smoothness condition of the boundary to be just Lipschitz, see \cite[Chap.~4]{Mc20}.
\end{remark}

\subsection{Steklov-Dirichlet and Neumann-Dirichlet Comparison}
Let $\Omega$ be compactly embedded into  the ball $B_{\inj(p)}(p)$, where the boundary of $\Omega$ is assumed to be $C^2$. Denote by $F$ a finite union of connected open subsets of $\partial\Omega$ where $\partial F$ is Lipschitz continuous. 
Denote by 
\[H_0^1(\Omega, \partial \Omega \setminus F)=\{u\in H^1(\Omega)\colon u=0 \text{ on }\partial\Omega\setminus F\},\]
where the boundary values are well defined by using the trace theorem.
Consider weak solutions in $H^1(\Omega)$, see \cite[Chap.~4]{Mc20} to the following eigenvalue problems:
   \textit{Steklov-Dirichlet problem}    
   \begin{equation}\label{stek}
\begin{cases}
   \Delta u(x)=0& \text{for } x\in \Omega\\
   u_n(x)=\sigma^F u(x) &\text{for }x\in  F\\
   u(x)=0 &\text{for } x\in\partial\Omega\setminus F,
\end{cases}
   \end{equation}
and \textit{Neumann-Dirichlet problem}
\begin{equation}\label{neumann}
\begin{cases}
   \Delta v(x)+\mu^F v(x)=0& \text{for } x\in \Omega\\
   v_n(x)=0 &\text{for }x\in F\\
   v(x)=0 &\text{for } x\in\partial\Omega\setminus F.
\end{cases}
\end{equation}
Notice that in the case $F=\partial \Omega$ we get the Steklov problem and the Neumann problem, respectively. Additionally, in the case that $F=\emptyset$ the Steklov-Dirichlet problem have only the trivial solution $u=0$, and the mixed Neumann-Dirichlet problem is the Dirichlet problem.

From \cite{Ag06} it follows that \[0\le \sigma_1^F\le\dots \le \sigma_n^F\to \infty.\]
For Neumann-Dirichlet eigenvalues we also have that 
\[0\le \mu_1^F\le\dots \le \mu_n^F\to \infty .\]
This can be shown by using the min-max theorem \cite[Theorem 5.15]{Bo20} together with a standard argument showing that $(-\Delta +1)^{-1}$ is a compact operator on $L^2(\Omega)$. 

The \textit{Rayleigh quotients} for the $k$'th eigenvalues are given by
\[\sigma_k^F=\inf_{\substack{V_k\subset H_0^1(\Omega,\partial\Omega\setminus F)\\ \dim(V_k)=k}}\sup_{u\in V_k \setminus \{0\}}\frac{\int_{\Omega}|\grad u|^2\dvol}{\int_{F}u^2\,\dS}\]
and
\[\mu^F_k=\inf_{\substack{V_k\subset H_0^1(\Omega,\partial\Omega\setminus F)\\ \dim(V_k)=k}}\sup_{v\in V_k\setminus \{0\} }\frac{\int_{\Omega}|\grad v|^2\dvol}{\int_{\Omega}v^2\dvol}.\]
The Rayleigh quotient for the mixed Steklov-Dirichlet can be found in \cite{Ag06}, while for the Neumann-Dirichlet problem the Rayleigh quotient follows from the min-max theorem \cite[Theorem 5.15]{Bo20}. Notice that the $k$'th Neumann-Dirichlet eigenvalue is bounded below by the $k$'th Neumann eigenvalue, and above by the $k$'th Dirichlet eigenvalue by using the Rayleigh quotient together with \[H^1(\Omega, \partial \Omega)\subset H^1(\Omega, \partial \Omega\setminus F)\subset H^1(\Omega).\]

Our first main result is the following generalization of \eqref{eq:intro:1}.
\begin{theorem}\label{thm:kut}
   Assume that $(M,\mathbf{g})$ satisfies \eqref{curv bound} and let $\Omega\subset M$ be a bounded $C^2$ domain contained inside $B_{\inj(p)}(p)$ for a fixed point $p\in M$. Denote by $r$ the radial distance function with respect to $p$ and let $\mu_k$ and $\sigma_k$ be as in \eqref{neumann} and \eqref{stek}, respectively. Then 
   \begin{equation}\label{eq:eigenvalue ineq}
   \sigma_k^F\ge \frac{h_{\min}\mu_k^F}{2r_{\max}\sqrt{\mu_k^F}+C_0},
   \end{equation}
   where 
   \[r_{\max}=\sup_{x\in \Omega} r(x),\qquad h_{\min}=\inf_{x\in F}\mathbf{n}\cdot r\grad r,\]
   and \[C_0=1+(n-1)\sup_{x\in\Omega}r(x)\cot_\kappa(r(x)).\]
\end{theorem}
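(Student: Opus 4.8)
The plan is to compare the two Rayleigh quotients pointwise on test functions and then transport the comparison through the min-max characterizations of $\sigma_k^F$ and $\mu_k^F$. The bridge between the boundary integral $\int_F u^2\dS$ appearing in the Steklov-Dirichlet quotient and the interior integrals $\int_\Omega u^2\dvol$ and $\int_\Omega|\grad u|^2\dvol$ will be produced by the divergence theorem applied to the vector field $X=r\grad r=\tfrac12\grad(r^2)$, which is smooth on $B_{\inj(p)}(p)$ since $r^2$ is. For $u\in H_0^1(\Omega,\partial\Omega\setminus F)$ (first for smooth $u$, then by density) the identity $\int_{\partial\Omega}u^2(X\cdot\mathbf{n})\dS=\int_\Omega\dv(u^2X)\dvol$, together with $u=0$ on $\partial\Omega\setminus F$, gives
\begin{equation*}
\int_F u^2\,(r\grad r\cdot\mathbf{n})\dS=\int_\Omega u^2\,\dv(r\grad r)\dvol+2\int_\Omega u\,\grad u\cdot(r\grad r)\dvol.
\end{equation*}

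First I would compute $\dv(r\grad r)=|\grad r|^2+r\Delta r=1+r\Delta r$ and invoke Lemma~\ref{C:Rauch}, which (using $r\ge0$) yields $\dv(r\grad r)\le 1+(n-1)r\cot_\kappa(r)\le C_0$ pointwise on $\Omega$. On the left-hand side I would use $r\grad r\cdot\mathbf{n}\ge h_{\min}$ on $F$ together with $u^2\ge0$, while the cross term is controlled by two applications of Cauchy-Schwarz, namely $2\int_\Omega u\,\grad u\cdot(r\grad r)\dvol\le 2r_{\max}\int_\Omega|u|\,|\grad u|\dvol\le 2r_{\max}\big(\int_\Omega u^2\dvol\big)^{1/2}\big(\int_\Omega|\grad u|^2\dvol\big)^{1/2}$, where I use $|\grad r|=1$ and $r\le r_{\max}$. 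Combining these estimates produces the key inequality
\begin{equation*}
h_{\min}\int_F u^2\dS\le C_0\int_\Omega u^2\dvol+2r_{\max}\Big(\int_\Omega u^2\dvol\Big)^{1/2}\Big(\int_\Omega|\grad u|^2\dvol\Big)^{1/2}.
\end{equation*}

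Assuming $h_{\min}>0$ (the case of interest; otherwise the numerator of the right-hand side of \eqref{eq:eigenvalue ineq} is nonpositive and the bound is immediate), I would divide and rearrange. Writing $R=\int_\Omega|\grad u|^2\dvol\,/\,\int_\Omega u^2\dvol$ for the Neumann-Dirichlet Rayleigh quotient of $u$, the estimate above rearranges to
\begin{equation*}
\frac{\int_\Omega|\grad u|^2\dvol}{\int_F u^2\dS}\ge\frac{h_{\min}\,R}{C_0+2r_{\max}\sqrt{R}}=:f(R),
\end{equation*}
where $f$ is continuous and strictly increasing on $(0,\infty)$. The final step feeds this into the min-max formulas: for any $k$-dimensional $V_k\subset H_0^1(\Omega,\partial\Omega\setminus F)$, monotonicity and continuity of $f$ give $\sup_{u\in V_k}\int_\Omega|\grad u|^2\dvol/\int_F u^2\dS\ge f\big(\sup_{u\in V_k}R(u)\big)$, and taking the infimum over $V_k$ while again commuting $f$ with $\inf$ yields $\sigma_k^F\ge f(\mu_k^F)$, which is exactly \eqref{eq:eigenvalue ineq}.

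The main obstacle, apart from the routine justification of the divergence identity for $H^1$ functions by approximation, is the interchange of the increasing function $f$ with the outer $\inf$ and the inner $\sup$ in the min-max. This is where I would be careful, verifying that $\sup_u f(R(u))=f(\sup_u R(u))$ and $\inf_{V_k}f(g(V_k))=f(\inf_{V_k}g(V_k))$, where $g(V_k)=\sup_{u\in V_k}R(u)$; both follow from $f$ being monotone and continuous and from the relevant extrema being attained on the finite-dimensional spaces $V_k$, after which the Neumann-Dirichlet min-max identifies $\inf_{V_k}g(V_k)=\mu_k^F$.
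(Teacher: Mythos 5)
Your proof is correct and follows essentially the same route as the paper: the same divergence-theorem identity for the field $r\grad r=\tfrac12\grad(r^2)$, the same Laplacian comparison bound $\dv(r\grad r)\le C_0$, and the same Cauchy--Schwarz estimate on the cross term. The only difference is in the final bookkeeping: the paper evaluates the key inequality at the maximizer of the Neumann--Dirichlet quotient over the span of the first $k$ Steklov--Dirichlet eigenfunctions, whereas you pass the increasing function $f$ through the min-max characterizations; both closings are valid.
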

\begin{remark}\hfill
   \begin{itemize}
      \item If $\kappa\ge 0$ we have that $C_0 = n$ since $r\cot_\kappa(r)$ is non-increasing in $r$. Additionally, for $\kappa<0$ the function $r\cot_\kappa(r)$ is increasing, hence the maximum is obtained on the boundary.
      \item Inequality \eqref{eq:eigenvalue ineq} is non-trivial if and only if $h_{\min}> 0$. In the case that $M=\mathbb{R}^n$ and $\mathbf{0}\in \Omega$ the condition $h_{\min}>0$ means that $x$ cannot be tangent to the part of the boundary $F$. In the case that $F=\partial \Omega$ this condition implies that $\Omega$ is strictly star convex with respect to the point $\mathbf{0}$. We define a domain to be strictly star convex with respect to a point $p$ if any straight line from $p$ to the boundary intersect the boundary only at the end point.
      
      It should also be noted that all convex sets containing zero satisfy $h_{\min}>0$.
      \item The Weyl laws on $\Omega \subset \mathbb{R}^n$ in the case of the Steklov problem \cite[Sec.~5.2.1]{GP17} and Neumann problem \cite[p.~9]{Ch84} are 
      \begin{equation*}
      \lim_{k\to \infty}\frac{\sigma_k}{k^{\frac{1}{n-1}}}= 2\pi \left(\frac{1}{\omega_{n-1}\vol_{n-1}(\partial \Omega)}\right)^{\frac{1}{n-1}}
      \end{equation*}
      and 
      \begin{equation}\label{eq:weyl:Neumann}
      \lim_{k\to \infty}\frac{\mu_k}{k^{2/n}}= (2\pi)^{2} \left(\frac{2}{\omega_{n}\vol_{n}( \Omega)}\right)^{\frac{1}{n}},
      \end{equation}
      where $\omega_m$ denotes the volume of the $m$-dimensional unit ball.
      In this case we have that the right hand-side of \eqref{eq:eigenvalue ineq} behaves like $C_0(\Omega)k^{\frac{1}{n-1}}$ and the left hand-side behaves like $C_1(\Omega)k^{\frac{1}{n}}$. This shows that the result is trivially true when $k$ goes to infinity.
   \end{itemize}
\end{remark}
\begin{proof}
   When $k=1$ and $F=\partial \Omega$ we have that $\sigma_1^F=\mu_1^F=0$ and the inequality trivially holds. Hence we will assume that either $k\ge 2$ or $\overline{F}\not = \partial \Omega$.

   Let $\sigma_m^F$ be the $m$'th eigenvalue of the Steklov-Dirichlet problem with corresponding eigenfunction $u_m$, and denote by 
   \[V_k=\textrm{span}\{u_1,\dots ,u_k\}\subset H^1_0(\Omega, \partial \Omega \setminus F).\]
   Then for any $v\in V_k\setminus \{0\}$ we have that 
   \[\frac{\int_{\Omega}|\grad v|^2\dvol}{\int_Fv^2\dS}\le\sigma_k^F.\]
   Choose $v\in V_k\setminus \{0\}$ to satisfy
   \[\frac{\int_{\Omega}|\grad v|^2\dvol}{\int_{\Omega}v^2\dvol}=\sup_{u\in V_k\setminus \{0\}}\frac{\int_{\Omega}|\grad u|^2\dvol}{\int_{\Omega}u^2\dvol}.\]
   By the definition of the eigenvalues of the mixed Neumann-Dirichlet problem we get that 
   \begin{equation}\label{eq:neu}
   \mu_k^F\int_{\Omega}v^2\dvol\le \int_{\Omega}|\grad v|^2\dvol.
   \end{equation}
   Since we are inside the ball $B_{\inj(p)}(p)$ we have that $r^2$ is a smooth function.
   Using the integration by parts \eqref{eq:divergence} and Lemma \ref{C:Rauch} we obtain
   \begin{align}\label{eq:normal}
   2\int_Fv^2\mathbf{n}\cdot r\grad r\dS&=\int_Fv^2\mathbf{n}\cdot\grad (r^2)\dS\\
   &=\int_\Omega \grad v^2\cdot \grad r^2 \dvol+\int_\Omega v^2 \Delta(r^2)\dvol\nonumber\\
   &=2\int_\Omega \grad v^2\cdot r \grad r \dvol+\int_\Omega v^2( 2+ 2r\Delta r)\dvol\nonumber\\
   &\le 2\int_\Omega  \grad v^2\cdot r\grad r \dvol+2C_0\int_\Omega v^2\dvol.\nonumber
   \end{align}
   The Cauchy-Schwartz inequality together with \eqref{eq:neu} gives 
   \begin{equation}\label{eq:CS}
   \int_\Omega v\grad v\cdot r\grad r\dvol\le \frac{r_{\max}}{\sqrt{\mu_k^F}}\int_\Omega|\grad v|^2\dvol.
   \end{equation}
Using \eqref{eq:neu}, \eqref{eq:normal}, and \eqref{eq:CS} gives 
   \[\int_Fv^2\mathbf{n}\cdot r\grad r\dS\le \left(2r_{\max}(\mu_k^F)^{-1/2}+C_0/\mu_k^F\right)\int_\Omega |\grad v|^2\dvol.\]
   Simplifying the expression
   \[h_{\min}\int_Fv^2\dS\le \left(2r_{\max}(\mu_k^F)^{-1/2}+C_0/\mu_k^F\right)\sigma_k^F\int_Fv^2\,\dS\]
   gives the result.
\end{proof}

In the special case when $\Omega=B_{R}(p)$ we get the following corollary.
\begin{corollary}\label{Cor:ball}
Let $(M,\mathbf{g})$ be an $n$-dimensional Riemannian manifold and let $\Omega = B_{R}(p)\subset M$ where $R<\inj(p)$ and $F\subset \partial B_{R}(p)$. Then 
\[\sigma_k^F\ge \frac{R\mu^F_k}{2R\sqrt{\mu^F_k}+C_0},\]
 where
\[C_0=
\begin{cases}
 n & \kappa\ge 0\\  
1+(n-1)R\cot_\kappa(R) & \kappa <0
   \end{cases}.
\]
\end{corollary}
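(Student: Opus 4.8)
The plan is to obtain the corollary as a direct specialization of Theorem \ref{thm:kut} to the geodesic ball $\Omega = B_R(p)$, the only genuine work being to evaluate the three geometric constants $r_{\max}$, $h_{\min}$, and $C_0$ in this case. First I would verify the hypotheses: since $R < \inj(p)$, the closed ball $\overline{B_R(p)}$ lies inside $B_{\inj(p)}(p)$ and its boundary is a smooth geodesic sphere, so $\Omega$ is an admissible $C^2$ domain and, assuming the Ricci lower bound \eqref{curv bound} with the same constant $\kappa$ appearing in $C_0$, the inequality \eqref{eq:eigenvalue ineq} applies verbatim.

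The radial distance function on $B_R(p)$ satisfies $0 \le r(x) < R$ in the interior and $r(x) = R$ on $\partial B_R(p)$, so $r_{\max} = \sup_{x\in\Omega} r(x) = R$ immediately. The key computation is $h_{\min}$. On the geodesic sphere $\partial B_R(p)$ the outward unit normal is precisely the radial field, $\mathbf{n} = \grad r$, and $r \equiv R$ there; hence for every $x \in F \subset \partial B_R(p)$,
\[\mathbf{n} \cdot r\grad r = R\,\grad r \cdot \grad r = R|\grad r|^2 = R,\]
using $|\grad r| = 1$. As this quantity is constant over $F$, we conclude $h_{\min} = \inf_{x\in F}\mathbf{n}\cdot r\grad r = R$.

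It remains to identify $C_0 = 1 + (n-1)\sup_{x \in \Omega} r(x)\cot_\kappa(r(x))$, which reduces to locating the supremum of $t \mapsto t\cot_\kappa(t)$ on $(0,R]$. By the monotonicity recorded in the remark following Theorem \ref{thm:kut}, for $\kappa \ge 0$ this function is non-increasing, so the supremum equals its limiting value at the origin and $C_0 = n$; for $\kappa < 0$ it is increasing, so the supremum is attained at $r = R$ and $C_0 = 1 + (n-1)R\cot_\kappa(R)$. Substituting $r_{\max} = h_{\min} = R$ together with these values of $C_0$ into \eqref{eq:eigenvalue ineq} yields the stated inequality. The only genuinely geometric input—and the step I would be most careful about—is the identity $\mathbf{n} = \grad r$ on the geodesic sphere, which forces $h_{\min}$ to collapse to the single constant $R$; everything else is bookkeeping within the formula of Theorem \ref{thm:kut}.
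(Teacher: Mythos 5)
Your proposal is correct and follows exactly the route the paper intends: the corollary is stated as an immediate specialization of Theorem \ref{thm:kut}, and the only content is the evaluation $r_{\max}=R$, $h_{\min}=R$ (via $\mathbf{n}=\grad r$ and $r\equiv R$ on the geodesic sphere), and the identification of $C_0$ from the monotonicity of $r\cot_\kappa(r)$ noted in the remark. Nothing further is needed.
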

\begin{example}
  Let $n=2$ and $(M,\mathbf{g})$ be the hyperbolic space with curvature $-1$. In this case we have that the Steklov eigenvalues on the ball $B_R(p)$ are given by $\sigma_k=\frac{(\lfloor k/2\rfloor)^2}{\sinh(R)}$, where $\lfloor\cdot \rfloor$ denotes the floor function. Note that by the formula we have that each eigenvalue for $k>1$ have multiplicity two. The corresponding eigenfunctions written in geodesic polar coordinates are given by 
  \[u_{2m}(r,\theta)=\tanh(r/2)^{m^2}\sin( m\theta)\quad \text{and}\quad u_{2m+1}(r,\theta)=\tanh(r/2)^{m^2}\cos( m\theta),\]
  where $k$ is either $2m$ or $2m+1$.
  Using Corollary \ref{Cor:ball}, the Neumann eigenvalue $\mu_k(R)$ on the ball $B_R(p)$ satisfies 
\[\frac{(\lfloor k/2\rfloor)^2}{\sinh(R)}\ge \frac{R\mu_k(R)}{2R\sqrt{\mu_k(R)}+1+R\coth(R)}.\]
Taking the limit as $R\to \infty$ gives that 
\[\lim_{R\to \infty}\mu_k(R)= 0.\]
When $k=1$ this is already known, see \cite[p.~46 Thm.~5]{Ch84}.
\end{example}

By using Remark \ref{re:Lip} we have that for bounded domains in $\mathbb{R}^n$ with Lipschitz boundary the proof of Theorem \ref{thm:kut} still holds. Hence we can explore the following examples.
\begin{example}\label{ex:half circle}
   Let $J_l(r)$ denote the $l$'th \textit{Bessel function} for $l\in \mathbb{N}\setminus \{0\}$, which satisfies the differential equation \[r^2J_l''(r)+rJ_l'(r)+(r^2-l^2)J_l(r)=0.\]
   Consider the half-ball
   \[D_+ = \{ (x,y)\in \mathbb{R}^2 \colon x^2+y^2< 1,\, y>0\}.\]
   On this domain we will consider the problems
   \[
\begin{cases}
   \Delta u(x,y)=0& \text{on }  D_+\\
   u_n(x,y)=\sigma u &\text{on } F=\{ (x,y)\in \mathbb{R}^2 \colon x^2+y^2= 1,\, y>0\}\\
   u(x,y)=0 &\text{on } \{(x,0)\in \mathbb{R}^2\colon |x|\le 1\},
\end{cases}
\]
and 
\[
\begin{cases}
   \Delta v(x,y)+\mu v(x,y)=0& \text{on } D_+\\
   v_n(x,y)=0 &\text{on } F=\{ (x,y)\in \mathbb{R}^2 \colon x^2+y^2= 1,\, y>0\}\\
   v(x,y)=0 &\text{on }  \{(x,0)\in \mathbb{R}^2\colon |x|\le 1\}.
\end{cases}
\]
 The solutions to the Neumann-Dirichlet problem are given by \[v(r,\theta) =J_l(j_{l,m}' r)\sin(l\theta)\qquad l\in \mathbb{N},\] 
 where $0<j'_{l,m}$ is the $m$'th zero of $J_l'$ and $\mu=(j'_{l,m})^2$. The Steklov-Dirichlet problem have the solutions \[u(r,\theta) = r^k\sin(k\theta)\qquad k\in \mathbb{N}\]
 with corresponding eigenvalues $\sigma_k=k>0$. If we order the zeroes $j'_{l,m}$ we have that 
   \[\sigma_k=k\ge \frac{j_{l_k,m_k}'^2}{2(j_{l_k,m_k}'+1)}=\frac{\mu_k}{2\sqrt{\mu_k}+2}.\]

   We can extend the Neumann-Dirichlet eigenfunctions above to Neumann eigenfunction on the ball by using a reflection. Recall that for the Neumann problem all the eigenspaces corresponding to non-constant eigenfunctions have dimension $2$. The solutions given by the extension from the Neumann-Dirichlet problem gives us one of the two linearly independent solutions of the eigenspace of the Neumann problem. The Weyl law presented in \eqref{eq:weyl:Neumann} shows that $\mu_k\asymp k$.
\end{example}

\begin{example}\label{ex:stekolov:square}
   Let $\Omega=[0,1]\times [0,1]$ and let $F=\{0,1\}\times (0,1)$. We pose the problem 
   \begin{equation}\label{eq:mixed_square}
   \begin{cases}
      \Delta v(x,y)+\mu v(x,y)=0& \text{on } \Omega\\
      v_n(x,y)=0 &\text{on } F\\
      v(x,y)=0 &\text{on } [0,1]\times \{0,1\}.
   \end{cases}
   \end{equation}
      Then we have the solutions
      \[v(x,y)=\cos\left(\pi mx\right)\sin\left(\pi ny\right)\]
      with corresponding eigenvalues $\mu=(\pi m)^2+(\pi n)^2$, where $m\in \mathbb{N}\cup \{0\}$ and $n\in \mathbb{N}\setminus \{0\}$.
   
   The Steklov-Dirichlet problem on the same domain is given by 
   \[
   \begin{cases}
      \Delta u(x,y)=0& \text{on } \Omega\\
      u_n(x,y)=\sigma u(x,y) &\text{on } F\\
      u(x,y)=0 &\text{on } [0,1]\times \{0,1\}.
   \end{cases}
   \]
   Computing the eigenvalues gives
   \[
   \sigma=\pi n\left( \frac{\cosh\left(\frac{\pi n}{2}\right)}{\sinh\left(\frac{\pi n}{2}\right)}\right)^{\pm 1}
   \]
   with corresponding eigenfunctions
   \begin{align*}
   u(x,y)&=\Bigg(\cosh\left(\pi nx\right)-\frac{\sigma}{n\pi}\sinh\left(\pi nx\right)\Bigg)\sin\left(\pi ny\right),
   \end{align*} 
   for all $n\in \mathbb{N}\setminus\{0\}$.
   Using that \[\pi (n-1)\coth\left(\frac{\pi(n-1)}{2}\right)<\pi n\tanh\left(\frac{\pi n}{2}\right)\]
    for all $n\in \mathbb{N}\setminus \{0,1\}$, we can find the order of the eigenvalues. Hence we get 
   \[
   \sigma_{2k}=\pi k\coth\left(\frac{\pi k}{2}\right)
   \]
   with corresponding eigenfunctions
   \begin{align*}
   u_{2k}(x,y)&=\Bigg(\cosh\left(\pi kx\right)-\coth\left(\frac{\pi k}{2}\right)\sinh\left(\pi kx\right)\Bigg)\sin\left(\pi ky\right)
   \end{align*} 
   and 
   \[
   \sigma_{2k-1}=\pi k\tanh\left(\frac{\pi k}{2}\right)
   \]
   with corresponding eigenfunctions
   \begin{align*}
   u_{2k-1}(x,y)&=\Bigg(\cosh\left(\pi kx\right)-\tanh\left(\frac{\pi k}{2}\right)\sinh\left(\pi kx\right)\Bigg)\sin\left(\pi ky\right).
   \end{align*} 
   In this case we have that $r_{\max} = \frac{1}{\sqrt{2}}$, $h_{\min} = 1/2$, and $C_0=2$. 
   Using Theorem~\ref{thm:kut} we obtain
   \[\sigma_{2k}=\pi k\coth\left(\frac{\pi k}{2}\right)\ge \frac{\pi^2(m_{2k}^2+n_{2k}^2)}{2\sqrt{2}\pi\sqrt{m_{2k}^2+n_{2k}^2}+4}.\]
   and 
   \[\sigma_{2k-1}=\pi k\tanh\left(\frac{\pi k}{2}\right)\ge \frac{\pi^2(m_{2k-1}^2+n_{2k-1}^2)}{2\sqrt{2}\pi\sqrt{m_{2k-1}^2+n_{2k-1}^2}+4},\]
   where $\mu_j=\pi^2(m_j^2+n_j^2)$.
\end{example}

\subsection{Robin-Dirichlet Comparison}
In this section we will assume that $\partial \Omega \setminus F$ has non-empty interior in $\partial\Omega$. The \textit{Robin-Dirichlet} problem for any $\alpha>0$ is defined by
\[
\begin{cases}
   \Delta w(x)+\lambda^\alpha w(x)=0& \text{for } x\in \Omega\\
   w_n(x) + \alpha w=0 &\text{on } F\\
   w(x)=0 &\text{on } \partial\Omega\setminus F.
\end{cases}
\]
Note that we have suppressed the domain $F$ in the Robin-Dirichlet eigenvalues $\lambda^\alpha$ for readability. 
By the min-max theorem \cite[Theorem 5.15]{Bo20} we have that the Rayleigh quotient is given by 
\[\lambda^\alpha_k=\inf_{\substack{V_k\subset H_0^1(\Omega,\partial\Omega\setminus F)\\ \dim(V_k)=k}}\sup_{w\in V_k \setminus \{0\}}\frac{\int_{\Omega}|\grad w|^2\dvol+\alpha\int_{F}w^2\,\dS}{\int_{\Omega}w^2\dvol}.\] 
The eigenvalue of the Robin-Dirichlet problem is always between the Dirichlet eigenvalues and the Neumann-Dirichlet eigenvalues with the Dirichlet eigenvalue being the limit as $\alpha \to \infty$.
Again we have only point spectrum since $(-\Delta + 1)^{-1}$ is a compact operator on $L^2(\Omega)$.

\begin{theorem}\label{thm:kutlertwo}
   Assume that $\bar{F}\not = \partial \Omega$, then eigenvalues satisfy
   \[\frac{\lambda^\alpha_k - \mu_k^F}{\alpha}\le \frac{\mu_k^F}{\sigma_1^F}.\]
\end{theorem}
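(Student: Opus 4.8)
The plan is to exploit the fact that all three eigenvalue problems share the same form domain $H_0^1(\Omega,\partial\Omega\setminus F)$ and to feed the Neumann--Dirichlet eigenfunctions into the Robin--Dirichlet variational principle. Concretely, I would let $v_1,\dots,v_k$ be the first $k$ eigenfunctions of the Neumann--Dirichlet problem \eqref{neumann} and set $V_k=\Span\{v_1,\dots,v_k\}\subset H_0^1(\Omega,\partial\Omega\setminus F)$. Since the $v_i$ may be taken orthonormal in $L^2(\Omega)$ and mutually orthogonal in the Dirichlet form $(v,w)\mapsto\int_\Omega \grad v\cdot\grad w\dvol$, expanding an arbitrary element in this basis shows that every $w\in V_k\setminus\{0\}$ satisfies
\[\int_\Omega |\grad w|^2\dvol \le \mu_k^F\int_\Omega w^2\dvol.\]

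First I would insert $V_k$ as a competitor in the Robin--Dirichlet min-max, which gives
\[\lambda_k^\alpha \le \sup_{w\in V_k\setminus\{0\}}\frac{\int_\Omega|\grad w|^2\dvol + \alpha\int_F w^2\dS}{\int_\Omega w^2\dvol}.\]
I would then split this quotient into an \emph{interior} and a \emph{boundary} part. The interior part is controlled directly by the displayed inequality above and contributes at most $\mu_k^F$. For the boundary part I would invoke the Steklov--Dirichlet Rayleigh quotient for \eqref{stek}: because $\sigma_1^F$ is the infimum of $\int_\Omega|\grad u|^2\dvol/\int_F u^2\dS$ taken over the \emph{same} space, we have $\sigma_1^F\int_F w^2\dS\le \int_\Omega|\grad w|^2\dvol$ for every $w\in V_k$, and combining this with the interior bound yields
\[\frac{\int_F w^2\dS}{\int_\Omega w^2\dvol}\le \frac{1}{\sigma_1^F}\cdot\frac{\int_\Omega|\grad w|^2\dvol}{\int_\Omega w^2\dvol}\le \frac{\mu_k^F}{\sigma_1^F}.\]

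Adding the two contributions gives $\lambda_k^\alpha\le \mu_k^F + \alpha\,\mu_k^F/\sigma_1^F$, and rearranging produces the claimed estimate after dividing by $\alpha$. The only point requiring genuine care is that the right-hand side be finite, i.e.\ that $\sigma_1^F>0$; this is exactly where the standing assumption $\bar F\ne\partial\Omega$ of this subsection enters, since the Dirichlet condition on $\partial\Omega\setminus F$ (which has nonempty interior) supplies a Poincar\'e-type inequality forcing $\sigma_1^F>0$. Beyond this, I do not anticipate any serious obstacle: the argument is essentially a one-line test-function estimate, and the work lies in verifying that the interior and boundary bounds compose correctly on the common subspace $V_k$.
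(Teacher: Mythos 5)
Your argument is correct and is essentially identical to the paper's proof: both test the Robin--Dirichlet Rayleigh quotient on the span of the first $k$ Neumann--Dirichlet eigenfunctions, bound the volume term by $\mu_k^F$, and control the boundary term via $\sigma_1^F\int_F w^2\,\dS\le\int_\Omega|\grad w|^2\dvol$. Your added remarks on the orthogonality needed for the interior bound and on $\sigma_1^F>0$ only make explicit what the paper leaves implicit.
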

\begin{proof}
   For all $u\in H_0^1(\Omega, \partial\Omega\setminus F)$ we have that 
   \[\sigma_1^F\int_{F}u^2\,\dS\le \int_{\Omega}|\grad u|^2\dvol.\]
   Denote by $v_m$ the eigenfunctions counting multiplicity of the Neumann-Dirichlet problem with eigenvalue $\mu_m$. Let $V_k$ consists of the span of $v_1, \dots, v_k$. 
   By the Rayleigh quotient the eigenvalue $\lambda_k^\alpha$ satisfies
   \[\lambda^\alpha_k \le \sup_{v\in V_k\setminus\{0\} }\frac{\int_{\Omega}|\grad v|^2\dvol+\alpha\int_{F}v^2\,\dS}{\int_{\Omega}v^2\dvol} \le \mu^F_k+ \alpha\frac{\mu^F_k}{\sigma_1^F}.\qedhere\]
\end{proof}

\begin{remark}\label{rem:derivative}
Taking the limit when $\alpha \to 0$ and assuming that $\lambda^\alpha_k$ is differentiable at $0$, we have that 
   \[\left.\frac{d\lambda^\alpha_k }{d\alpha}\right|_{\alpha = 0}\le \frac{\mu_k^F}{\sigma_1^F}.\] 
\end{remark}
\begin{example}
   In this example we will continue using some of the notation from Example \ref{ex:half circle}.
   Consider the equation 
   \[\alpha J_l(r)+rJ_l'(r)=0.\]
   Denote by $j_{l,m}^\alpha$ the $m$'th zero of the equation.
   Posing the problem
   \[
\begin{cases}
   \Delta w(x, y)+\lambda^\alpha w(x,y)=0& \text{for } (x,y)\in D_+\\
   w_n(x,y)+\alpha w(x,y)=0 &\text{on } F= \{ (x,y)\in \mathbb{R}^2 \colon x^2+y^2= 1, y>0\}\\
   w(x,y)=0 &\text{on } \{(x,0)\in \mathbb{R}^2\colon |x|\le 1\},
\end{cases}
\]
we get the solution $w(r,\theta) = J_l(j_{l,m}^\alpha r) \sin(l\theta)$, with eigenvalue $\lambda^\alpha =  (j_{l,m}^\alpha)^2$. Using Theorem \ref{thm:kutlertwo} we obtain 
\[\frac{(j_{l_k,m_k}^\alpha)^2-(j_{l_k,m_k}')^2}{\alpha}\le (j_{l_k,m_k}')^2=\mu_k.\]

Taking the derivative of the equation 
   \[\alpha J_l(j_{l,m}^\alpha)+j_{l,m}^\alpha J_l'(j_{l,m}^\alpha)=0,\]
   with respect to $\alpha$ gives
   \[J_l(j_{l,m}^\alpha)+\alpha \frac{dj_{l,m}^\alpha}{d\alpha} J_l'(j_{l,m}^\alpha)+\frac{dj_{l,m}^\alpha}{d\alpha} J'_l(j_{l,m}^\alpha )+j_{l,m}^\alpha\frac{dj_{l,m}^\alpha}{d\alpha}J_l''(j_{l,m}^\alpha)=0.\]
   Taking the limit when $\alpha \to 0$ shows that 
   \[J_l(j_{l,m}')+j_{l,m}'\left.\frac{dj_{l,m}^\alpha}{d\alpha}\right|_{\alpha=0}J_l''(j_{l,m}')=J_l(j_{l,m}')+j_{l,m}'\left.\frac{dj_{l,m}^\alpha}{d\alpha}\right|_{\alpha=0}\frac{l^2-(j'_{l,m})^2}{(j'_{l,m})^2}J_l(j_{l,m}')=0,\]
   where we have used that $J_l'(j_{l,m}')=0$.
   Simplifying the derivative we get
   \begin{align*}
   \left.\frac{d\lambda^\alpha}{d\alpha}\right|_{\alpha=0}&=\left.\frac{d(j_{l_k,m_k}^\alpha)^2}{d\alpha}\right|_{\alpha=0}\\
   &=2j'_{l_k,m_k}\left.\frac{dj_{l_k,m_k}^\alpha}{d\alpha}\right|_{\alpha=0}\\
   &=2\frac{(j_{l_k,m_k}')^2}{(j_{l_k,m_k}')^2-l_k^2}.
   \end{align*}
   Using Remark \ref{rem:derivative} and Example \ref{ex:half circle} we get 
   \begin{equation}
   \left.\frac{d\lambda^\alpha}{d\alpha}\right|_{\alpha=0}=2\frac{(j_{l_k,m_k}')^2}{(j_{l_k,m_k}')^2-l_k^2}\le \frac{\mu_k}{\sigma_1}=(j_{l_k,m_k}')^2.
   \end{equation}
   Simplifying this expression we get $j'_{l,m}\ge \sqrt{l^2+2}$. This inequality follows from the stronger inequality $j'_{l,m}>j'_{l,1}> \sqrt{l(l+2)}$, which can be found in \cite[p.~486]{Wa95}. In particular, since we also have that $j'_{l,1}\le l+Cl^{1/3}$, see e.g.\ \cite{EL97}, we get 
   \[\lim_{l\to \infty} \frac{j'_{l,1}}{\sqrt{l^2+2}}=1.\]
   This makes the inequality sharp in the limit.
\end{example}

\section{Hadamard Formula for the Mixed Neumann-Dirichlet Problem}\label{sec:hadamard}
The classical Hadamard formula on the Dirichlet Laplace problem is as follows: Let $u_t$ be a Dirichlet eigenfunction with the $\lambda_t$ eigenvalue on the time dependent domain $\Omega_t\subset \mathbb{R}^n$ with the speed $V$. Then we have that the Hadamard formula gives that 
\[\frac{d\lambda}{dt}=-\int_{\partial \Omega_t} V\cdot \mathbf{n} u^2\dS.\]
In the case of the Riemannian manifold can be found in e.g.\ \cite{EI07}.
To be more precise, let $V$ be a (possibly time dependent) vector field and let $\Omega_t$ be the deformation of the domain $\Omega_0$ with respect to the corresponding one-parameter family of diffeomorphisms $\phi$ with speed $V$. Assume that $\Omega_t$ is pre-compact with smooth boundary $\partial\Omega_t$, and $F_t = \phi_t(F)$. We will use the superscript $t$ to denote the time-dependence.

Let $v^t \in H^2(\Omega)\cap C^1(\overline{\Omega})$ be a solution to the mixed Neumann-Dirichlet problem 
 \[\begin{cases}
      \Delta v^t(x) +\mu^t v^t(x) =0 & \text{in } \Omega_t\\
      v^t_n(x) = 0 & \text{on }F_t\\
      v^t(x) = 0 & \text{on }\partial\Omega_t\setminus F_t.
    \end{cases}\]
    We will prove the Hadamard formula
    \begin{equation}\label{eq:hademard_ND}
    \partial_t\mu^t=\int_{F_t}\left(|\grad_{\partial \Omega_t}v^t|^2-\mu^t(v^t)^2\right)\dS-\int_{\partial \Omega_t\setminus F_t}\mathbf{n}\cdot V\,(v^t_n)^2 \dS.
    \end{equation}
    Formula \eqref{eq:hademard_ND} is written down in \cite[Eq.~(11)]{Gr10}. We will assume the eigenvalues are simple and that $\mu^t$ and $v^t$ is differentiable in $t$, where $\frac{dv^t}{dt}\in C^1(\overline{\Omega_t})$. For the case of non-simple eigenvalues see \cite{Gr10}.
When taking the derivative of integrals it will be useful to have the following lemma in mind:
\begin{lemma}[{\cite[p. 138]{Fr12}}]
   Let $V$ be a possibly time dependent vector field, and let $\Omega_t$ be the variation of $\Omega_0$ with respect to $V$. Assume furthermore that $\Omega_t$ is pre-compact with smooth boundary $\partial\Omega_t$. Then given a smooth function $h^t\in C^1(\Omega_t)$ which is differentiable in $t$ we have that 
   \[\frac{d}{dt}\int_{\Omega_t}h^t\dvol=\int_{\Omega_t}\partial_t h^t\dvol+\int_{\partial \Omega_t}\mathbf{n}\cdot V \,h^t\, \dS.\]
\end{lemma}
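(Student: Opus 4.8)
The plan is to reduce the moving-domain integral to one over the fixed reference domain by pulling back along the flow, so that all the $t$-dependence lives in the integrand and one may differentiate under the integral sign. Let $\phi_t$ be the one-parameter family of diffeomorphisms generated by $V$, so that $\Omega_t=\phi_t(\Omega_0)$ and $\partial_t\phi_t(y)=V(t,\phi_t(y))$. Changing variables $x=\phi_t(y)$ gives
\[\int_{\Omega_t}h^t\dvol=\int_{\Omega_0}(h^t\circ\phi_t)\,J_t\dvol,\]
where $J_t(y)$ is the Jacobian of $\phi_t$, i.e.\ the ratio of the pulled-back volume density to the density at $y$. Since $\Omega_0$ no longer depends on $t$, I would differentiate under the integral and expand by the product and chain rules.

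The technical heart is the evolution of the Jacobian. The key identity I would establish is the Jacobi (Liouville) formula
\[\partial_t J_t=\big((\dv V)\circ\phi_t\big)\,J_t,\]
which says that the relative rate of change of the volume element along the flow equals the divergence of the velocity field; this is exactly where the divergence enters. In the Euclidean case it follows from the matrix identity $\frac{d}{dt}\det A=\det A\,\tr(A^{-1}\dot A)$ applied to $A=D\phi_t$, while in general it is the Lie-derivative relation $\mathcal{L}_V\dvol=(\dv V)\dvol$. Combining this with $\partial_t(h^t\circ\phi_t)=(\partial_t h^t)\circ\phi_t+(\grad h^t\cdot V)\circ\phi_t$, the differentiated integrand becomes
\[\partial_t\big[(h^t\circ\phi_t)\,J_t\big]=\Big[\partial_t h^t+\grad h^t\cdot V+h^t\,\dv V\Big]\circ\phi_t\;J_t.\]

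Changing variables back to $\Omega_t$ removes the factors $\phi_t$ and $J_t$, so that
\[\frac{d}{dt}\int_{\Omega_t}h^t\dvol=\int_{\Omega_t}\Big(\partial_t h^t+\grad h^t\cdot V+h^t\,\dv V\Big)\dvol.\]
Finally I would recognize the last two terms as $\dv(h^t V)$ and apply the divergence theorem, $\int_{\Omega_t}\dv(h^t V)\dvol=\int_{\partial\Omega_t}\mathbf{n}\cdot V\,h^t\dS$, which yields the claimed formula. I expect the only genuine obstacle to be the Jacobi formula for $\partial_t J_t$; once the divergence has been extracted there, the remainder is bookkeeping together with a single application of the divergence theorem. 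One should also track regularity, since $h^t$ is merely $C^1$ and differentiable in $t$, but this suffices both to differentiate under the integral over the compactly contained $\Omega_0$ and to invoke the divergence theorem on the smooth boundary $\partial\Omega_t$.
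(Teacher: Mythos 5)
Your argument is correct: pulling back to the fixed domain $\Omega_0$, invoking the Liouville identity $\partial_t J_t=((\dv V)\circ\phi_t)J_t$, and then undoing the change of variables and applying the divergence theorem is precisely the standard proof of this transport theorem. The paper itself gives no proof, citing \cite[p.~138]{Fr12}, and your route coincides with the standard argument found there, so there is nothing to add.
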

\begin{proof}[Proof of \eqref{eq:hademard_ND}]
Note that since 
\[\int_{\Omega_t} (v^t)^2\vol = 1\]
 with the boundary conditions we get that 
\begin{equation*}
2\int_{\Omega_t}v^t\partial_t v^t \dvol+\int_{F_t}V\cdot \mathbf{n}\,(v^t)^2\dS=0.
\end{equation*}
The (normalized) Rayleigh quotient gives us that \[\mu^t = \int_{\Omega_t}|\grad v^t|^2\dvol.\]
Taking the derivative of $\mu^t$ with respect to $t$ and using the divergence theorem gives 
\begin{align*}
\partial_t\mu^t&=2\int_{\Omega_t}\grad \partial_t v^t\cdot \grad v^t\dvol +\int_{\partial \Omega_t}\mathbf{n}\cdot V\,|\grad v^t|^2\dvol\\
&=2\int_{\partial \Omega_t\setminus F_t}v^t_n\partial_t v_t\dS-2\int_{\Omega_t}(\Delta v^t)\partial_tv^t\dvol+\int_{\partial\Omega_t}\mathbf{n}\cdot V\,|\grad v^t|^2\dS\\
&=2\int_{\partial \Omega_t\setminus F_t}v^t_n\partial_t v^t\dS-\mu^t\int_{F_t}V\cdot \mathbf{n} (v^t)^2\dS+\int_{\partial\Omega_t}\mathbf{n}\cdot V\,|\grad v^t|^2\dS.
\end{align*}

For $x\in \partial\Omega_t\setminus F_t$ using the chain rule gives
\[0= \frac{d}{dt}(v^t(\phi_t(x)))= \frac{dv^t}{dt}(\phi_t(x))+V\cdot \grad v^t=\frac{dv^t}{dt}(\phi_t(x))+V\cdot \mathbf{n} v^t_n,\]
where we have used that gradient in the tangent direction to $\partial \Omega_t\setminus F_t$ of $v$ is $0$.
This gives that
\begin{align*}
\partial_t\mu^t&=-2\int_{\partial \Omega_t\setminus F_t}V\cdot \mathbf{n}\,(v_n^t)^2\dS-\mu^t\int_{F_t}V\cdot \mathbf{n} (v^t)^2\dS+\int_{\partial\Omega_t}\mathbf{n}\cdot V\,|\grad v^t|^2\dS\\
&=\int_{F_t}\left(|\grad_{\partial \Omega_t}v^t|^2-\mu^t(v^t)^2\right)\dS-\int_{\partial \Omega_t\setminus F_t}\mathbf{n}\cdot V\,(v^t_n)^2 \dS.&\qedhere
\end{align*}
\end{proof}
\section{Rellich Identity}\label{sec:rellich}
Applying the Hadamard formulas in the case that $V$ is the vector field $x\mapsto x$ we can obtain Rellich type identities.
The time independent version of the problems in the previous section are given as follows:
Denote by $\Omega\subset \mathbb{R}^n$ a domain with piece-wise smooth boundary $\partial \Omega$. We will assume that the solutions of the mixed Neumann-Dirichlet are normalized in $L^2(\Omega)$. 
We are going to show that the eigenvalue $\mu^F$ with solution $v\in H^2(\Omega)\cap C^1(\overline{\Omega})$ of the mixed Neumann-Dirichlet problem \eqref{neumann} satisfies
\[\mu^F\left(\int_F \mathbf{n}\cdot x\, v^2\dS-2\right)= \int_{F}\mathbf{n}\cdot x\, |\grad_{\partial \Omega}v|^2\dS-\int_{\partial\Omega\setminus F}\mathbf{n}\cdot x\, v_n^2\dS.\]

\begin{remark}
In \cite[Theorem 1.5]{HS19} the authors show the Rellich identity by using the divergence theorem. This means that when we have solutions on a Lipschitz domain and a solution in $v\in H^2(\Omega)$, where $|\grad v|^2$ on the boundary is interpreted by using the trace theorem.
\end{remark}

Denote by $\Omega\subset \mathbb{R}^n$ a smooth pre-compact domain. The idea of going from the Hadamard formula to the corresponding Rellich type identity is to evolve the domain by using the vector field $x\mapsto x$, which by abuse of notation we are going denote to $x$. We will denote the perturbation of the domain by $\Omega_t$, which is explicitly given by $e^t\Omega$. Since the domain does not change shape, only size, we can find out how the eigenvalues explicitly change. Hence setting $t = 0$ in the Hadamard formula will give us an expression for the eigenvalues using the boundary data.

Let $v$ be a solution to
\begin{equation*}
   \begin{cases}
      \Delta v(x)+\mu^F v(x) &\text{in } \Omega\\
      v_n(x)=0&\text{on }F\\
      v(x)=0&\text{on }\partial \Omega\setminus F
   \end{cases}.
\end{equation*}
Then we have that $v^t(x)=e^{-tn/2}v(e^{-t}x)$ and $\mu^t=e^{-2t}\mu^F$ solves 
\begin{equation*}
   \begin{cases}
      \Delta v^t(x)+\mu^t v^t(x)=0 &\text{in } \Omega_{t}\\
      v^t_n(x)=0&\text{on }F_{t}\\
      v^t(x)=0&\text{on }\partial \Omega \setminus F_{t}
   \end{cases},
\end{equation*}
and additionally satisfy $\int_{\Omega_t}(v^t)^2\dvol=1$. Using the Hadamard formula we have that 
\begin{equation}\label{eq:derivative}
\partial_t\mu^t=-2e^{-2t}\mu^F=\int_{e^tF}\mathbf{n}\cdot x\left(\left|\grad_{\partial \Omega_t}v^t\right|^2-\mu^t (v^t)^2\right)\, \dS-\int_{e^t\partial \Omega \setminus F}\mathbf{n}\cdot x\, (v_n^t)^2\dS.
\end{equation}
Setting $t=0$ and collecting the $\mu^F$-terms on the left-hand side we have 
    \[\mu^F\left(\int_{F}\mathbf{n}\cdot x \,v^2\,\dS-2\right)= \int_{\partial \Omega}\mathbf{n}\cdot x\,|\grad_{\partial \Omega} v|^2\, \dS-\int_{\partial \Omega \setminus F}\mathbf{n}\cdot x\, v_n^2\dS.\]
\begin{remark}
\begin{itemize}
    \item  For the general mixed Neumann-Dirichlet problem we know that in certain cases the $\int_F x\cdot \mathbf{n} v^2\dS=2$. For an example of when this happens see Example \ref{ex:square}.
    \item Note that in the case that $F=\partial \Omega$ and $\Omega$ is star-convex with respect to the origin, we have that $ \mathbf{n}\cdot x\ge 0$ with some open part of the boundary having $\mathbf{n}\cdot x> 0$. Hence assuming that $\mu \not = 0$ and using \eqref{eq:derivative} we get  \[\left(\int_{\partial\Omega}\mathbf{n}\cdot x v^2\,\dS -2\right)\mu=\int_{\partial\Omega}\mathbf{n}\cdot x\left|\grad_{\partial\Omega}v\right|^2\, \dS>0.\] In particular we have that $\int_{\partial\Omega}\mathbf{n}\cdot x v^2\,\dS -2>0,$
meaning that the denominator is not identically $0$.
\item In the case that $F=\emptyset$, we get that 
\[-2\mu^F= -\int_{\partial\Omega\setminus F}\mathbf{n}\cdot x\, v_n^2\dS,\]
which simplifies to 
\[\mu^F= \frac{1}{2}\int_{\partial\Omega\setminus F}\mathbf{n}\cdot x\, v_n^2\dS.\]
\end{itemize}
\end{remark}

\section{Rellich-Christianson Type Identity}\label{rellich-christanson}
In this section we will look on the Rellich identities applied to polytopes. It should be noted that if the solution is regular enough, the Rellich identities also work on Lipschitz domains, due to an alternative proof using the divergence theorem, see \cite{HS19,Li07}, and Section~\ref{sec:rellich}. For a good reference on the mixed Neumann-Dirichlet problem on Lipschitz domains see \cite{LR17}.

A polytope is by definition a domain restricted by a finite number of hypersurfaces in $\mathbb{R}^n$ with finite volume. We will let $P$ denote a polytope with faces $F_i$, $i = 1,\dots , d$. The hypersurfaces corresponding to $F_i$ will be denoted $H_i$. Let $p$ be an arbitrary point in $\mathbb{R}^n$. Then we define the signed distance $\dist(p)_i$ from $p$ to $H_i$ to be minus the distance from $p$ to $H_i$ in the case that $p$ and $P$ are on the same side of $H_i$, and the distance otherwise.

In the rest of this section we will assume that $p$ is at the origin, which we will denote by $\mathbf{0}$. When this is the case we have that if $x\in F_i$ then $\mathbf{n}\cdot x=\dist(\mathbf{0})_i$. In the next couple of subsections we will use this fact to simplify the Rellich identity. The Rellich-Christianson identity for the mixed Neumann-Dirichlet generalizes the Rellich-Christianson identity for the Dirichlet problem found in \cite{Me18}. We are going to present a similar proof to the one found in \cite{Me18}.
\begin{center}
\begin{tikzpicture}[scale = 0.7]
   \draw[thick,->] (-0.3,0) -- (9,0);
   \draw[thick,->] (0,-0.3) -- (0,5);
   \node at (3.5,2.5) {$P$};
   \draw (1,1) -- (2,4) -- (6,4) -- (5.5,2.5) node[anchor = west]{$F_i$} -- (5,1) -- (1,1);
   \draw[red,dotted,thick] (0,0) -- (5.3333,2) node[anchor = west]{$x$};
   \draw[dashed,blue] (4,-2) -- (4.5,-0.5) node[anchor = west]{$H_i$} -- (6.5,5.5);
   \draw[red,dotted,thick] (0,0)-- (2.1,-0.7) node[anchor = north east]{$\mathbf{n}\cdot x=\dist(\mathbf{0})_i$} -- (4.2,-1.4);
\end{tikzpicture}
\end{center}

We are going to denote by $F_i^n=F_i$ a face where we have posed the Neumann boundary conditions and $F_i^d=F_{i+j}$ the faces where we have posed Dirichlet conditions. In this case the mixed Neumann-Dirichlet problem becomes
\begin{equation}\label{eq:mixed_poly}
   \begin{cases}
      \Delta v(x)+\mu v(x)=0 &\text{in } P\\
      v_n(x)=0&\text{on }F^n_1\cup F^n_2\cup \dots\cup F^n_j\\
      v(x)=0&\text{on }F^d_1\cup F^d_2\cup \dots\cup F^d_k
   \end{cases}.
\end{equation}
To get the Rellich identity we will need that $v\in H^2(P)$. 
\begin{proposition}\label{prop:Chrisian}
Let $v\in H^2(P)$ be a solution to \eqref{eq:mixed_poly} which is normalized in $L^2(P)$.
In this case we have that $\mu$ and $v$ satisfy 
\begin{multline}\label{eq:chris:1}
\mu\left(\sum_{i=1}^j\dist(p)_i\int_{F_i^n} v^2\, \dS-2\right)=\sum_{i=1}^j\dist(p)_i\int_{F_i^n}\left|\grad_{\partial P}v\right|^2\, \dS\\
-\sum_{i=1}^k\dist(p)_{i+j} \int_{F_i^d} v_n^2\dS.
\end{multline}
\end{proposition}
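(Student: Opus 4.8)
The plan is to specialize the Rellich identity derived at the end of Section~\ref{sec:rellich} to the polytope $P$, exploiting that $\mathbf{n}\cdot x$ is constant on each face. Recall that for a domain $\Omega$ with Neumann part $F$ of its boundary that identity reads
\[
\mu\left(\int_{F}\mathbf{n}\cdot x\,v^2\,\dS-2\right)=\int_{\partial \Omega}\mathbf{n}\cdot x\,|\grad_{\partial \Omega} v|^2\,\dS-\int_{\partial \Omega\setminus F}\mathbf{n}\cdot x\, v_n^2\,\dS.
\]
I would apply this with $\Omega=P$, $F=F_1^n\cup\dots\cup F_j^n$ and $\partial P\setminus F=F_1^d\cup\dots\cup F_k^d$, so that after a face-by-face decomposition it produces \eqref{eq:chris:1}. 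Since a polytope is only Lipschitz and not smooth, the Hadamard-formula derivation of Section~\ref{sec:rellich} does not apply verbatim; instead I would invoke the divergence-theorem proof of the Rellich identity, valid on Lipschitz domains for solutions lying in $H^2$, as recorded in the remark of Section~\ref{sec:rellich} and in \cite{HS19,Li07}. This is precisely why the hypothesis $v\in H^2(P)$ is imposed.

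Next I would decompose each boundary integral as a finite sum over the faces $F_i$, the lower-dimensional skeleton (edges and vertices) carrying no $\dS$-measure and hence contributing nothing. On each face the key observation stated just before the proposition is that $\mathbf{n}\cdot x=\dist(\mathbf{0})_i$ is \emph{constant} along $F_i$ (recall $p=\mathbf{0}$, so $\dist(p)_i=\dist(\mathbf{0})_i$), whence this scalar factors out of the corresponding face integral. For the left-hand term this gives $\int_F \mathbf{n}\cdot x\,v^2\,\dS=\sum_{i=1}^j \dist(\mathbf{0})_i\int_{F_i^n}v^2\,\dS$, and for the last term on the right $\int_{\partial P\setminus F}\mathbf{n}\cdot x\, v_n^2\,\dS=\sum_{i=1}^k \dist(\mathbf{0})_{i+j}\int_{F_i^d}v_n^2\,\dS$.

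It remains to treat the tangential-gradient term, which a priori runs over all of $\partial P$. On each Dirichlet face $F_i^d$ the function $v$ vanishes identically, so its tangential gradient $\grad_{\partial P}v$ vanishes there as well; thus only the Neumann faces contribute and one obtains $\int_{\partial P}\mathbf{n}\cdot x\,|\grad_{\partial P}v|^2\,\dS=\sum_{i=1}^j\dist(\mathbf{0})_i\int_{F_i^n}|\grad_{\partial P}v|^2\,\dS$. Collecting the three displayed identities then yields \eqref{eq:chris:1} directly.

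The main obstacle is the regularity issue of the first paragraph rather than the algebra: one must ensure the Rellich identity genuinely holds on the non-smooth domain $P$. Concretely, the integration by parts underlying the identity has to be justified for $v\in H^2(P)$ on a Lipschitz boundary, and the traces of $v$, $v_n$, and $\grad_{\partial P}v$ on each open face must be the expected ones, with the singular set where faces meet contributing no boundary term. Once this is secured, the remaining steps amount to the routine bookkeeping with the constants $\dist(\mathbf{0})_i$ described above.
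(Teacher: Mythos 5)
Your proposal matches the paper's own proof: both specialize the Rellich identity of Section~\ref{sec:rellich} to $P$ (justified for $v\in H^2(P)$ on the Lipschitz domain via the divergence-theorem version), factor the constant $\mathbf{n}\cdot x=\dist(\mathbf{0})_i$ out of each face integral, and observe that the tangential gradient contributes only on the Neumann faces. The only difference is that the paper closes with a one-line translation of the identity from $\mathbf{0}$ to an arbitrary point $p$, a step you omit by relying on the section's standing assumption $p=\mathbf{0}$.
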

\begin{remark}
\begin{itemize}\hfill
  \item  In general, one can not assume that solutions are in $H^2(\Omega)$ when $\Omega$ is Lipschitz domain. An example of this is the problem on the half disk $D_+$ given by
\begin{equation*}
    \begin{cases}
    \Delta v+\mu v=0& \text{on }D_+ \\
    v_n=0 & \text{on }\{(r,\pi)\colon  r<1\}\\
    v=0 & \text{on }\{(r,0)\colon r<1\}\cup\{(1,\theta)\colon 0<\theta<\pi\}.
    \end{cases}
\end{equation*}
A solution to this problem is given by
\[v(r,\theta)=J_{1/2}(j_{1,1/2}r)\sin(\theta/2),\]
where $j_{1,1/2}$ is the first zero of the Bessel function $J_{1/2}$.
Using the Taylor series \[J_{1/2}(x)=\sum_{m=0}^\infty \frac{(-1)^{m}}{m!\Gamma(m+3/2)}\left(\frac{x}{2}\right)^{m+1/2}\] is an eigenfunction on the half-ball $D_+$ which is not in $H^2(D_+)$.
\item  The assumption that $v\in H^2(P)$ is for example true in the case of convex polygons in $\mathbb{R}^2$ where we assume that the angle between two consecutive sides $F_i^n$ and $ F_j^d$ is less than $\pi/2$, see \cite{Dy79}.
\end{itemize}
\end{remark}
\begin{proof}
Using the Rellich identity for the mixed Neumann-Dirichlet problem we
have that
\[\mu\left(\sum_{i=1}^j\int_{F_i^n}\mathbf{n}\cdot x\, v^2\, \dS-2\right)=\sum_{i=1}^j\int_{F_i^n}\mathbf{n}\cdot x\left|\grad_{\partial P}v\right|^2\, \dS-\sum_{i=1}^k \int_{F_i^d}\mathbf{n}\cdot x\, v_n^2\dS.\]
Using the above computation that $\mathbf{n}\cdot x= \dist(\mathbf{0})_i$ on each face of the polytope we get
\begin{multline*}
\mu\left(\sum_{i=1}^j\dist(\mathbf{0})_i\int_{F_i^n} v^2\, \dS-2\right)=\sum_{i=1}^j\dist(\mathbf{0})_i\int_{F_i^n}\left|\grad_{\partial P}v\right|^2\, \dS\\
-\sum_{i=1}^k\dist(\mathbf{0})_{j+i} \int_{F_i^d} v_n^2\dS.
\end{multline*}
where $|\grad_{\partial P} v|^2 = |\grad v|^2 - v_n^2$.
By translating the formula to an arbitrary point $p$ we get
\begin{multline*}
\mu\left(\sum_{i=1}^j\dist(p)_i\int_{F_i^n} v^2\, \dS-2\right)=\sum_{i=1}^j\dist(p)_i\int_{F_i^n}\left|\grad_{\partial P}v\right|^2\, \dS\\
-\sum_{i=1}^k\dist(p)_{j+i} \int_{F_i^d} v_n^2\dS.
\end{multline*}
\end{proof}
\begin{example}
   In this example we will only have Dirichlet condition on one of the sides of the square. Let $P=[0,1]\times [0,1]$ with the sides $F_1^n=\{0\}\times [0,1]$, $F_2^n=[0,1]\times \{1\}$, $F_3^n=\{1\}\times [0,1]$, and $F_1^d=[0,1]\times \{0\}$. We will let $p=(1/2,1/2)$, which means that $\dist_i(p)=1/2$.
   \begin{figure}[h!]
      \centering
   \begin{tikzpicture}[scale =3]
      \draw (0,0) node[below] {$(0,0)$} -- (1,0)  node[below] {$(1,0)$} -- (1,1) node[above] {$(1,1)$} -- (0,1)  node[above] {$(0,1)$}-- cycle;
      \fill (0.5,0.5) circle [radius=0.01];
      \node[below] at (0.5,0.499) {$p=(1/2,1/2)$};
      \node[below] at (0.5,0) {$F_1^d$};
      \node[left] at (0,0.5) {$F_1^n$};
      \node[above] at (0.5,1) {$F_2^n$};
      \node[right] at (1,0.5) {$F_3^n$};
   \end{tikzpicture}
   \end{figure}
   In this case the eigenfunctions are given by 
   \[v(x,y)=2\cos\left(\pi mx\right)\sin\left(\pi (n+1/2)y\right)\]
   with corresponding eigenvalues $\mu=\pi^2(m^2+(1/2+ n)^2)$ where $m, n\in \mathbb{N}\cup \{0\}$. 
   Computing the tangential gradient, we get
      \begin{multline*}
      \int_{F_1^n}|\grad_{\partial P}v|^2\dS=\int_{F_2^n}|\grad_{\partial P}v|^2\dS\\
      =4\int_0^1|\pi (n+1/2)\cos\left(\pi (n+1/2)y\right)|^2\mathrm{dy}=2\pi^2 (n+1/2)^2,
      \end{multline*}
      and 
      \[\int_{F_3^n}|\grad_{\partial P}v|^2\dS=2\pi^2m^2.\]
      The integral of the normal derivative is
      \[\int_{F_1^d}v_n^2\dS=4\int_0^1|\pi (n+1/2)\cos\left(\pi mx\right)|^2\mathrm{dx}=2\pi^2 (n+1/2)^2.\]
      In this case we have that 
      \begin{multline*}
      \dist(p)_1\int_{F_1^n}|\grad_{\partial P}v|^2\dS+\dist(p)_2\int_{F_2^n}|\grad_{\partial P}v|^2\dS\\
      +\dist(p)_3\int_{F_3^n}|\grad_{\partial P}v|^2\dS =2\pi^2 (n+1/2)^2+\pi^2m^2,
      \end{multline*}
      and 
      \[\dist(p)_{4}\int_{F_1^d}v_n^2\dS=\pi^2( n+1/2)^2.\]
      This means that 
    \[\sum_{i=1}^3\dist(p)_i\int_{F_i^n}\left|\grad_{\partial P}v\right|^2\, \dS-\dist(p)_{4} \int_{F_1^d} v_n^2\dS=\mu.\]
    We have also have that
     \[\int_{F_i^n}v^2\dS=2,\]
     and
     \[\dist(p)_1\int_{F_1^n}v^2\dS+\dist(p)_2\int_{F_2^n}v^2\dS+\dist(p)_3\int_{F_3^n}v^2\dS-2=1.\]
   Hence we see that \eqref{eq:chris:1} holds.
\end{example}
\begin{example}[Example \ref{ex:stekolov:square} continued]\label{ex:square}
We saw in Example \ref{ex:stekolov:square} that the solutions to the problem \eqref{eq:mixed_square} are given by
\[v(x,y)=2\cos\left(\pi mx\right)\sin\left(\pi ny\right)\]
      with corresponding eigenvalues $\mu=(\pi m)^2+(\pi n)^2$, where $m\in \mathbb{N}\cup \{0\}$ and $n\in \mathbb{N}\setminus \{0\}$. In this case we have that 
      $F_1^n=\{0\}\times [0,1]$, $F_2^n=\{1\}\times [0,1]$, $F_1^d=[0,1]\times \{0\}$ and $F_2^d=[0,1]\times \{1\}$. In this case we will let $p$ be arbitrary.
      On \[\int_{F_1^n}|\grad_{\partial P}v|^2\dS=\int_{F_2^n}|\grad_{\partial P}v|^2\dS=2(\pi n)^2\]
      and 
      \[\int_{F_1^d}v_n^2\dS=\int_{F_2^d}v_n^2\dS=2(\pi n)^2.\]
      In this case we have that 
      \[\dist(p)_1\int_{F_1^n}|\grad_{\partial P}v|^2\dS+\dist(p)_2\int_{F_2^n}|\grad_{\partial P}v|^2\dS=2(\pi n)^2,\]
      and 
      \[\dist(p)_{3}\int_{F_1^d}v_n^2\dS+\dist(p)_{4}\int_{F_2^d}v_n^2\dS=2(\pi n)^2.\]
      This means that 
    \[\sum_{i=1}^2\dist(p)_i\int_{F_i^n}\left|\grad_{\partial P}v\right|^2\, \dS-\sum_{i=1}^2\dist(p)_{i+2} \int_{F_i^d} v_n^2\dS=0.\]
     We have also have that
      \[\int_{F_1^n}v^2\dS=\int_{F_2^n}v^2\dS=2,\]
      and
      \[\dist(p)_1\int_{F_1^n}v^2\dS+\dist(p)_2\int_{F_2^n}v^2\dS=2.\]
    Hence we end up with zero on both sides in \eqref{eq:chris:1}.

    In general, one can change the point $p$ to change the right hand side and left hand side of \eqref{eq:chris:1}. However, in this case we get the same result independent of the point $p$.
\end{example}

\bibliographystyle{abbrv}
\bibliography{Bibliography}
\end{document}